\documentclass[preprint,12pt]{elsarticle}
\usepackage{amsfonts}
\usepackage{amssymb}
\usepackage{mathrsfs}
 \usepackage{indentfirst}
 \usepackage{graphicx}

\DeclareGraphicsRule{.jpg}{eps}{.bb}{}
\usepackage{subfigure}
    \usepackage{verbatim}
    \usepackage{latexsym}

  \usepackage{amsmath,amssymb,amsthm}

\journal{}

\setlength{\textheight}{20cm} \setlength{\textwidth}{13cm}
\voffset=-8mm \hoffset=-10mm
\begin{document}

\begin{frontmatter}
    \title{Potential Distribution on Random Electrical Networks}
     \author{Da-Qian Qian and Xiao-Dong Zhang\\
     Department of Mathematics, \\
     Shanghai Jiao Tong University, 200240, P.R.China\\
      xiaodong@sjtu.edu.cn}

\begin{abstract}
Let  $G=(V,E)$  be a random electronic network with the boundary
vertices which is obtained by assigning a resistance of each edge in
a random graph in $\mathbb{G}(n,p)$ and the voltages on the boundary
vertices. In this paper, we prove that  the potential distribution
of all vertices of $G$ except for the boundary vertices are very
close to a constant with high probability for $p=\frac{c\ln n}{n}$
and $c>1$.
\end{abstract}
\end{frontmatter}
\section{Introductions}
 \newtheorem{theorem}{Theorem}[section]
\newtheorem{lemma}[theorem]{Lemma}
\newtheorem{corollary}[theorem]{Corollary}
\newtheorem{remark}{Remark}
\newtheorem{definition}{Definition}
The connection between random walks and electrical networks can be
recognized by  Kakutani \cite{kakutani1945} in 1945.  Doyle and
Snell \cite{doyle1984} in 1984 published an excellent book which
explained the relations between random walks and electrical
networks.  Tetali \cite{Tetali1991} presented an interpretation of
effective resistance in electrical networks in terms of random walks
on underlying graphs.  Recently, Palacios \cite{palacios2009}
studied the hitting times on random walks on trees through electroic
networks.  For more information between random walks and electrical
networks, the readers may be referred to \cite{griffeath1982,
kemeny1966, kelly1979}.

Let $G=(V,E)$ be a connected undirected graph without loops and
multiple edges. To make it a electrical network, we assign to each
edge $e=(i,j)\in E$ resistance $r_{ij}$ and the conductance of
$e=(i,j)$ is $c_{ij}=\frac{1}{r_{ij}}$.  We define a random walk on
the electrical network $N=(G,c)$ be a Markov chain
 with transition matrix
$\textbf{P}=(p_{ij})$ given by
\[p_{ij}=\frac{c_{ij}}{c_i}\]
with $c_i=\sum_{ (i,j)\in E}{c_{ij}}$. If  $r_{ij}$  is assigned to
be a unit resistance,  it is just the simple random walk on $G$.

It is well known that the potential and current distributions  on
electrical networks follow both Kirchhoff's  and Ohm's laws.
Further,  it is proved (for example, see \cite{bollobas1998}) that
the potential distribution follows a harmonic function with certain
boundary conditions, i.e., the potential of each vertex except for
the boundary vertices is just the weight-average of the potential of
its neighbor's.  Here boundary vertices  are those vertices at which
there are current flowing into or out of the network. Moreover,
Lyons and Peres in \cite{lyons2010} investigated the potential
distributions on regular lattice. Curtis and Morrow \cite{curis1991}
determined the distribution of resistors in a rectangular network by
using boundary measurements. These works established the intimate
connection between the random walks on graphs and electrical current
networks.  It is natural to ask what we can say if the random walks
and electrical networks are considered  on a random graph?

The space $\mathbb{G}(n,p)$, is defined for $0\le p\le 1$. To get a
random element of this space, we select the edges independently,
with probability $p$.  For more information and background, the
readers are referred to Bollob\'{a}s ' book \cite{bollobas2001}.
Recently, many other  random graph models, such as small-world model
\cite{watts1998}, BA model \cite{barabasi1999} etc.,have been
proposed to simulate and study the mass real world networks .

Grimmett and Kesten

??

 considered the effective resistances for random electrical
networks on random model $\mathbb{G}(n,p)$.
 In this paper we mainly
investigate the potential distributions of the electrical network on
random graph $G\in\mathbb{G}(n,p)$ by assigning a resistance
$c_{ij}$ for each edge $e=(i,j)$. By making use of the probabilistic
interpretation of potential distribution on electrical networks  as
well as the fast mixing property \cite{cooper2007} for random walks
on a random graph $G\in \mathbb{G}(n,p)$. A sequence of events
${\mathcal{E}}_n$ is said to occur with high probability ({\bf whp})
if $\lim_{n\rightarrow \infty} Pr({\mathcal{E}}_n)=1$. The main
result of this paper can be stated as follows:

\begin{theorem}\label{mainresult}
Let $N=(G,c)$ be a random electrical network from a  random graph
$G\in \mathbb{G}(n,p)$ model with $p=\frac{\alpha\ln n}{n},
\alpha>1$ and $C_1\leq c_{ij}\leq C_2$ for all $e=(i,j)\in E$, where
$0<C_1\leq C_2 $ are two positive constants. If  $\Gamma \triangleq
\{x_1,x_2,\cdots,x_K\}$ is the boundary set with boundary potential
$V_{x_k}=p_{x_k}, 0\leq p_{x_k}\leq 1$ for $k=1,2,\cdots,K$, then
\textbf{whp} the potential distribution satisfies
\begin{equation}\label{1}
V_i=\frac{\sum_{k=1}^Kp_{x_k}c_{x_k}}{\sum_{k=1}^Kc_{x_k}}+O((\ln
n)^{-1})
\end{equation}
 for each $i\in V(G)\setminus\Gamma$.
\end{theorem}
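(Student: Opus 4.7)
The plan is to pass to the probabilistic (random-walk) representation of the harmonic potential and then exploit the fast mixing of the walk. Let $(X_t)$ be the Markov chain on $V(G)$ with transitions $p_{ij}=c_{ij}/c_i$, and let $\pi_v = c_v/\sum_u c_u$ be its reversible stationary distribution. Write $\tau = \inf\{t \ge 0 : X_t \in \Gamma\}$ and $\pi_\Gamma = \sum_{l=1}^K \pi_{x_l}$. Since the harmonic function with prescribed boundary values is unique, one has the standard identity
\[
V_i = \sum_{k=1}^K p_{x_k}\,\Pr\nolimits_i(X_\tau = x_k), \qquad i \in V(G)\setminus\Gamma,
\]
and clearly $\pi_{x_k}/\pi_\Gamma = c_{x_k}/\sum_{l=1}^K c_{x_l}$. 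Thus the theorem is equivalent to the uniform estimate $\Pr_i(X_\tau = x_k) = \pi_{x_k}/\pi_\Gamma + O((\ln n)^{-1})$ for every $i \notin \Gamma$ and every $k$.

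I would begin by collecting the whp properties of $G\in \mathbb{G}(n,\alpha\ln n/n)$ that will be used: every degree is $(1+o(1))\alpha\ln n$, so that $c_v=\Theta(\ln n)$ and $\pi_v=\Theta(1/n)$ uniformly; the simple random walk on $G$ mixes in $T_{\mathrm{mix}}=O(\ln n)$ steps by Cooper-Frieze \cite{cooper2007}, and a Diaconis-Saloff-Coste comparison transports this to the weighted walk since the conductances are pinched between $C_1$ and $C_2$. Fixing $T=C\ln n$ with $C$ large, the law of $X_T$ started from any $i$ is then within total variation distance, say, $n^{-10}$ of $\pi$.

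The heart of the argument is a block scheme. Partition time into blocks $I_j=((j-1)T,jT]$. For $j\ge 2$, on $\{\tau>(j-1)T\}$ the Markov property together with the $n^{-10}$ mixing bound shows that $X_{(j-1)T}$ is $n^{-10}$-close in total variation to $\pi$ restricted to $V\setminus\Gamma$. Two observations then drive the proof: (a) the per-block hitting probability is $\pi_\Gamma T(1+o(1))=\Theta(K\ln n/n)$, so the number of blocks before the first hit is approximately geometric; (b) reversibility gives $\mathbb{E}_\pi[\#\text{visits to }x_k \text{ in a block}]=T\pi_{x_k}$, and a second-moment bound yields $\Pr_\pi(\#\text{visits to }\Gamma \text{ in a block}\ge 2)=O((\pi_\Gamma T)^2)$, so that the first-visit location in a block, conditional on a visit, is $x_k$ with probability $\pi_{x_k}/\pi_\Gamma+o((\ln n)^{-1})$. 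Summing the geometric series over blocks, absorbing the mixing error ($n^{-10}$ per block over $\Theta(n/(K\ln n))$ blocks) and the contribution of the unmixed first block $I_1$ (whose total contribution is $O(T\pi_\Gamma)=O(K\ln n/n)$), gives the claimed $O((\ln n)^{-1})$ bound.

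I expect the main technical hurdle to be the per-block first-visit statement (b): a priori, within a short window of $O(\ln n)$ steps, the walk could preferentially hit one boundary vertex because of local graph structure near $\Gamma$. The reversibility trick---relating first-visit frequencies to expected occupation counts, which are \emph{exactly} proportional to $\pi_{x_k}$ in stationarity---bypasses this, but it requires the second-moment control $\Pr_\pi(\text{double visit to }\Gamma)\ll \Pr_\pi(\text{visit to }\Gamma)$, which in turn rests on the uniform estimate $\pi_v=\Theta(1/n)$ supplied by the random graph whp. Once that piece is in hand, chaining per-block bounds across the $\Theta(n/(K\ln n))$ blocks and propagating the TV error are routine, and the combined error is dominated by the $O((\ln n)^{-1})$ contribution from the first block, which accounts for the error term in \eqref{1}.
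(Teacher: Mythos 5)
Your route is genuinely different from the paper's: after the common reduction $V_i=\sum_k p_{x_k}\Pr_i(X_\tau=x_k)$, the paper first shows that $V_i$ is almost independent of $i$ (by conditioning on the walk avoiding $\Gamma$ for $2t_0=O(\ln n)$ steps and using mixing on the reduced network $N'=(G[V\setminus\Gamma],c)$, Lemma~\ref{lemma2}), and only then identifies the common value through Kirchhoff's current balance at the boundary together with \textbf{P5}; you instead identify $\Pr_i(X_\tau=x_k)$ directly as $\pi_{x_k}/\pi_\Gamma$ by a block/renewal argument on the full network. That identification is attractive, but as written the scheme has a genuine gap at its pivot: the assertion that on $\{\tau>(j-1)T\}$ the conditional law of $X_{(j-1)T}$ is within $n^{-10}$ of $\pi$ restricted to $V\setminus\Gamma$. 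Conditioning on survival is not innocuous---it drives the law towards the quasi-stationary distribution of the chain killed on $\Gamma$, and conditioning on an event can inflate a total-variation estimate by the reciprocal of that event's probability, so ``Markov property plus mixing'' does not deliver the claim. One must actually prove that the per-block distortion is small and does not accumulate over the $\Theta(n/(K\ln n))$ blocks, e.g.\ by induction over blocks or by a path-weight comparison with the walk killed on $\Gamma$; this is precisely what \eqref{lemma2-2}--\eqref{lemma2-3} of Lemma~\ref{lemma2} provide (via the ratios $\rho_W$), and your outline has no substitute for it.

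Two quantitative claims are also wrong, though the conclusion survives with corrected bookkeeping. First, $\Pr_\pi(\mbox{two or more visits to }\Gamma\mbox{ in a block})$ is not $O((\pi_\Gamma T)^2)$: once the walk sits at $x_k$ it returns to $x_k$ within two steps with probability $\sum_j p_{x_k j}p_{jx_k}=\Theta((\ln n)^{-1})$ (all degrees are $\Theta(\ln n)$), and nothing in the theorem prevents two boundary vertices from being adjacent; hence the double-visit probability is $\Theta(\pi_\Gamma T/\ln n)\gg(\pi_\Gamma T)^2$. This still yields a per-block relative error $O((\ln n)^{-1})$, which is what the theorem tolerates, but it---not the first block---then dominates the error. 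Second, the first-block contribution is not $O(T\pi_\Gamma)=O(K\ln n/n)$ uniformly in the starting vertex: the theorem must cover every $i\notin\Gamma$, including neighbours of boundary vertices, for which the walk hits $\Gamma$ within the first $O(\ln n)$ steps with probability $\Theta((\ln n)^{-1})$; this is exactly why \eqref{lemma2-2} reads $1-O((\ln n)^{-1})$ and why the Cooper--Frieze-type control of how often the walk leaves the neighbourhood of $\Gamma$ (their Lemma~4, used in Lemma~\ref{lemma2}) is needed. Your closing sentence, which calls this same first-block term $O((\ln n)^{-1})$ and dominant, is inconsistent with your $O(K\ln n/n)$ estimate for it; repairing these points essentially requires the machinery of Lemma~\ref{lemma2}, after which your occupation-measure identification of the constant is a legitimate alternative to the paper's current-conservation step.
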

The rest of this paper is arranged as follow. In sections 2, we
present some properties for both random graphs and electrical
networks. In section 3, we will give a rigorous proof for the main
theorem (Theorem~\ref{mainresult}) and apply this result to a
generalized consensus model with finite leaders. In section 4, we
will do some further discussions on the potential distributions in
more general cases where $G$ may be circles and small-world networks
and $c_{ij}$ may be i.i.d random variables for each $(i,j)\in E$. We
note here that we say an event holds with high probability(denoted
as \textbf{whp}) if it holds with probability $1-o(1)$ as $n\to
\infty$.
\section{Preliminaries}
In this section,  we introduce some properties of a random graph in
 $\mathbb{G}(n,p)$ and give the probabilistic interpretation of
potential distributions on electrical
 networks. Let $d(i)$ denote the degree of vertex $i \in V$ and let
 $\delta(G)$ denote the minimum degree of $G$.
\begin{definition}\label{properdef}
A simple graph $G=(V,E)$ is said to be proper if it has the
following structural properties $\textbf{P1}-\textbf{P5}$.
\quad\\
\textbf{P1}: $G$ is connected.\\
\textbf{P2}: Call a cycle short if its length is at most $\frac{\ln
n}{10\ln \ln n}$. The
 minimum distance between two short cycle is at least $\frac{\ln n}{\ln \ln n}$.\\
\textbf{P3}: $G$ has at least one triangle, at least one 5-cycle and at least one 7-cycle.\\
\textbf{P4}: Let $C_1$, $C_2$ ($C_1\leq C_2$), $K$ be positive
constants. Let $N=(G,c)$ be the electrical network on $G$  with
$C_1\leq c_{ij}\leq C_2$  for each  edge $e=(i,j)\in E$. For
$L\subset V$, $|L|\leq K$, denote by $G^{\prime}=G[V\setminus
L]\triangleq(V^{\prime},E^{\prime})$ the subgraph of $G$ induced by
$V\setminus L$. For $S\subset V\setminus L$, denote by
$E_{G^{\prime}}(S,\bar{S})$ the set of edges of $G^{\prime}$ with
one end in $S$ and the other in $\bar{S}=V\setminus (L\cup S)$. If
$|S|\leq \frac{n}{2}$, then
\begin{equation}\label{P4}
\frac{\sum_{(i,j)\in E_{G^{\prime}}(S,\bar{S})}c_{ij}}{\sum_{i\in
S}c_{i}^{\prime}}\geq \frac{C_1}{6C_2},
\end{equation}
where $c_i^{\prime}=\sum_{(i,j)\in E^{\prime}}c_{ij}$.\\
 \textbf{P5}: There exists a positive constant $\delta>0$ such
that it follows for
 any vertex $i \in V$ ,
\[\delta C\ln n<d(i)<4 C\ln n .
\]
\end{definition}

{\bf Remark}  From the definition, it looks  very rare for a graph
to be proper. But there are much many graphs to be proper. In fact,
we have the following result.
\begin{lemma}\label{random-proper}
Let $G$ be a random graph in  $\mathbb{G}(n,p)$ with
$p=\frac{\alpha\ln n}{n}$ and  a constant $\alpha>1$.
Then \textbf{whp} $G\in \mathbb{G}(n,p)$ is proper.
\end{lemma}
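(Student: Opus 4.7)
The plan is to prove each of \textbf{P1}--\textbf{P5} individually with probability $1 - o(1)$ and then take a union bound. Properties \textbf{P1}, \textbf{P3}, \textbf{P5} are standard random-graph facts. \textbf{P1} is the Erd\H{o}s--R\'enyi connectivity threshold at $p = \alpha \ln n / n$ with $\alpha > 1$. \textbf{P5} follows by applying a Chernoff bound to each degree $d(i) \sim \mathrm{Bin}(n-1, p)$ (mean $(1+o(1))\alpha \ln n$) and a union bound over the $n$ vertices, with the constant $\delta$ chosen small enough that each tail probability is $o(1/n)$. For \textbf{P3}, the expected number of $k$-cycles is $\binom{n}{k}\tfrac{(k-1)!}{2} p^k \sim (\alpha \ln n)^k/(2k) \to \infty$ for each $k \in \{3,5,7\}$, so the second-moment method gives existence whp.

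For \textbf{P2} I would use a first-moment argument. Two short cycles at distance less than $\ln n/\ln \ln n$ lie in a connected subgraph $H$ obtained by gluing the two cycles (possibly overlapping) by a connecting path; any such $H$ has cyclomatic number at least $2$, hence $e(H) \ge v(H)+1$. The expected number of copies of a fixed $H$ in $G$ is at most $n^{v(H)} p^{e(H)} \le (np)^{v(H)} p$. Summing over the sub-polynomial number of isomorphism types of $H$ permitted by the length and distance bounds in \textbf{P2} gives a total of $o(1)$, and Markov's inequality finishes the argument.

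The core of the lemma is \textbf{P4}, which must hold uniformly over all $L$ with $|L|\le K$ and all $S \subset V\setminus L$ with $|S| \le n/2$. The first reduction exploits $C_1 \le c_{ij} \le C_2$: the weighted ratio on the left-hand side of~(\ref{P4}) is at least $(C_1/C_2)\cdot |E_{G'}(S,\bar S)|/\mathrm{vol}_{G'}(S)$, so it suffices to bound the unweighted ratio below by $1/6$. The second reduction is that deleting $|L|\le K$ vertices changes numerator and denominator each by at most $K\cdot 4\alpha \ln n = O(\ln n)$ (by \textbf{P5}), a lower-order correction once the volume is $\omega(\ln n)$; constant-sized $S$ are handled by hand, the case $|S|=1$ giving ratio exactly $1$. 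The third reduction is probabilistic: for a fixed $S$ of size $s$, $|E_G(S,\bar S)|\sim \mathrm{Bin}(s(n-s),p)$ has mean $(1-s/n)spn$, while $\mathrm{vol}_G(S) = 2|E(S)|+|E(S,\bar S)|$ has mean $(1-1/n)spn$. Hence the expected unweighted ratio is $(n-s)/(n-1) \ge 1/2$ for $s \le n/2$, and Chernoff gives concentration.

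The main obstacle is then the uniformity of this estimate: the union bound over $\binom{n}{s}$ choices of $S$ and the $O(n^K)$ choices of $L$ must be dominated by the Chernoff failure probability. I would split into regimes. For $s = O(n/\ln n)$, the internal edges of $S$ are a negligible fraction of the volume, so attaining ratio below $1/6$ corresponds to a large multiplicative deviation of the boundary Binomial, yielding an exponent that comfortably dominates $\ln\binom{n}{s}$. For $s = \Omega(n/\ln n)$, the Chernoff exponent $spn = s\alpha \ln n$ dominates the entropy $\ln\binom{n}{s} = O(s \ln \ln n)$ in this range. Summing the exponentially small failure probabilities over $s$ and $L$ yields $o(1)$, completing the proof of \textbf{P4} and thus the lemma.
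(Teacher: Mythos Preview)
Your proposal is correct and matches the paper's approach in its one substantive step: the reduction of the weighted conductance bound in \textbf{P4} to the unweighted isoperimetric inequality $|E_{G'}(S,\bar S)|/\mathrm{vol}_{G'}(S)\ge 1/6$ via the sandwich $C_1\le c_{ij}\le C_2$ is exactly what the paper does. The only difference is one of packaging: the paper does not argue \textbf{P1}--\textbf{P3} or the unweighted version of \textbf{P4} from scratch but simply cites Lemma~1 of Cooper--Frieze~\cite{cooper2007} (which already establishes the $1/6$ bound in $G'$ uniformly over $|L|\le K$), and cites Lemma~6.5.2 of Durrett~\cite{durrett2006} for \textbf{P5}. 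Your self-contained Chernoff/union-bound sketch for \textbf{P4} is essentially a reconstruction of the Cooper--Frieze argument, so the two proofs coincide once one unpacks the citations.
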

\begin{proof}
 It follows from Lemma~1 in \cite{cooper2007} that
\textbf{whp}  $\textbf{P1}-\textbf{P3}$ hold.  Moreover, by
Lemma~6.5.2 in \cite{durrett2006}, \textbf{whp} \textbf{P5} also
holds. For \textbf{P4}, it is a simple generalization of Lemma~1 in
\cite{cooper2007}. In fact, by Lemma~1 in \cite{cooper2007}, we have
\[\frac{|E_{G^{\prime}}(S,\bar{S})|}{d_{G^{\prime}}(S)}\geq \frac{1}{6},\]
where $d_{G^{\prime}}(S)=\sum_{i\in S}d_{G^{\prime}}(i)$ and
$d_{G^{\prime}}(i)$ is the degree of vertex $i$ in $G^{\prime}$. So
we have \textbf{whp},
\[\frac{\sum_{(i,j)\in E_{G^{\prime}}(S,\bar{S})}c_{ij}}{\sum_{i\in S}c_{i}^{\prime}}
\geq \frac{C_1|E_{G^{\prime}}(S,\bar{S})|}{\sum_{i\in
S}C_2d_{G^{\prime}}(i)}\geq \frac{C_1}{6C_2}.\] This completes the
proof.
\end{proof}

Let $N=(G,c)$ be an electrical network. For any $i\in V$ let
$W_{i,N}$ denote a random walk on $N$ which starts at vertex $i$ and
let $W_{i,N}(t)$ denote the walk generated by the first $t$ steps.
Let $X_{i,N}(t)$ be the vertex reached at step $t$ and
$P_{i,N}^{(t)}(j)=Pr(X_{i,N}(t)=j)$. If  the random walk on $N$ is
irreducible and aperiodic, let \textbf{$\pi_{N}$ }be the stationary
distribution of the random walk on $N$. We also need the following
Lemma which is a slight generalization of Lemma~2 in
\cite{cooper2007}.
\begin{lemma}\label{lemma2}
Let  $G=(V, E)$ be  proper and  $N=(G,c)$ be the electrical network
with $C_1\leq c_{i,j}\leq C_2$ for each $e=(i,j)\in E$. For a subset
$\Gamma\triangleq \{x_1,x_2,\cdots,x_K\}$ of $V$, let
$N^{\prime}=(G^{\prime},c)$ be the  induced subnetwork obtained from
$G^{\prime}=G[V\setminus\Gamma]\triangleq(V^{\prime},E^{\prime})$.
Then
 there exists a sufficiently
large constant $K_0>0$ such that for all $i,j\in V^{\prime}$ and
$t>t_0=K_0 \ln n$,
\begin{equation}\label{lemma2-1}
|P_{i,N^{\prime}}^{(t)}(j)-\pi_{N^{\prime}}(j)|=O(n^{-10}),
\end{equation}
i.e., the random walk on $N^{\prime}$ mixes in time $O(\ln n)$.

Moreover, set $\mathscr{E}=\{X_{i,N}(t)\notin \Gamma, 0\leq t\leq
2t_0\}$ be the event that the random walk on $N$ started from $i$ do
not reach the vertices in $\Gamma$ for the first $2t_0$ steps. Then
we have
\begin{equation}\label{lemma2-2}
Pr(\mathscr{E})=1-O((\ln n)^{-1})
\end{equation}
and
\begin{equation}\label{lemma2-3}
Pr(X_{i,N}(2t_0)=j|\mathscr{E})=(1+O((\ln
n)^{-1}))Pr(X_{i,N'}(2t_0)=j).
\end{equation}
\begin{proof} The Lemma and Proof are almost the same as Lemmas 2-4
in \cite{cooper2007}, in which they only consider the simple random
walk on $G$.  In order to keep the consistency of our paper,  we
will simply give a sketch proof of  Lemma~\ref{lemma2} with emphasis
on different places.

 By \textbf{P3} and \textbf{P4},  the random walk on $N^{\prime}$
 is irreducible and aperiodic
and therefore it has a unit stationary distribution
$\pi_{N{\prime}}$. By using \textbf{P4} instead of and the
isoperimetric Inequality of Lemmas2-4 in \cite{cooper2007}, it is
easy to see that (\ref{lemma2-1}) holds.

For $1\leq k\leq K$, let $N_{G^{\prime}}(x_k)$ be the neighborhood
of $x_k$ in $G^{\prime}$  and $N_{G^{\prime}}(K)=\cup_{1\leq i\leq
K} N_{G^{\prime}}(x_k)$. Let $\delta_{K}$ be the minimum degree of
vertices in $N_{G^{\prime}}(K)$. Fixing  $i\in V^{\prime}$, for
$j\in V^{\prime}$, let $\mathcal {W}_{m,j}^{x_k}$($\mathcal
{W}_{m,j}^{K}$) denote the set of walks on $N^{\prime}$ which starts
at $i$, ends at $j$, are of length $2t_0$ and which leave a vertex
in the neighborhood $N_{G^{\prime}}(x_k)$($N_{G^{\prime}}(K)$)
exactly $m$ times. Let $\mathcal {W}_m^{x_k}=\bigcup_j{\mathcal
{W}_{m,j}^{x_k}}$, $\mathcal {W}_m^{K}=\bigcup_j{\mathcal
{W}_{m,j}^{K}}$ and $W=(w_0,w_1,\cdots,w_{2t_0})\in \mathcal
{W}_m^{K}$. Set
\[\rho_W=\frac{Pr(X_{i,N}(s)=w_s,s=0,1,\cdots,2t_0)}{Pr(X_{i,N'}(s)=w_s,s=0,1,\cdots,2t_0)}.\]
Then
\[1\geq \rho_W \geq (1-\frac{C_2K}{C_1\delta_{K}})^m.\]
This is because
\begin{displaymath}
\frac{Pr(X_{i,N}(s)=w_s |
X_{i,N}(s-1)=w_{s-1})}{Pr(X_{i,N'}(s)=w_s|X_{i,N^{\prime}}(s-1)=w_{s-1})}\geq
\left\{
\begin{array}{ll}
1 & \qquad {\rm if}\ w_{s-1}\notin N_{G^{\prime}}(K) \\
\frac{c_{w_{s-1}}-C_2K}{c_{w_{s-1}}} & \qquad {\rm if} \  w_{s-1}\in
N_{G^{\prime}}(K).
\end{array} \right.
\end{displaymath}
So
\begin{eqnarray*}
\qquad \qquad Pr(\mathscr{E})&=&\sum_{m\geq 0}\sum_{W\in {\mathcal
{W}_m^{K}}}Pr(W_{i,N}(2t_0)=W)\\
&=&\sum_{m\geq 0}\sum_{W\in {\mathcal
{W}_{m}^{K}}}\rho_W Pr(W_{i,N'}(2t_0)=W)\\
\end{eqnarray*}
\begin{equation}
\quad\geq\sum_{m\geq 0}p_m^{K}(1-\frac{C_2K}{C_1\delta_{K}})^m,
\end{equation}
where
\[p_m^{K}=\sum_{W\in\mathcal
{W}_{m}^{K}}Pr(W_{i,N^{\prime}}(2t_0)=W)=Pr(W_{i,N^{\prime}}(2t_0)\in\mathcal
{W}_{m}^{K}).\]
 Now fix $j$ and write
\begin{eqnarray*}
Pr(X_{i,N}=j|\mathscr{E})&=&\sum_{m\geq 0}\sum_{W\in {\mathcal
{W}_{m,j}^{K}}}Pr(W_{i,N}(2t_0)=W)Pr(\mathscr{E})^{-1}\\
&=&\sum_{m\geq 0}\sum_{W\in {\mathcal {W}_{m,j}^{K}}}\rho_W
Pr(W_{i,N{\prime}}(2t_0)=W)Pr(\mathscr{E})^{-1}.
\end{eqnarray*}
If we set
\begin{eqnarray*}
p_{m,j}^{K}&=&\frac{Pr(W_{i,N^{\prime}}(2t_0)\in\mathcal
{W}_{m,j}^{K})}{Pr(X_{i,N'}(2t_0)=j)}\\
&=&Pr(W_{i,N'}(2t_0)\, \textrm{leaves a vertex of}\, N_{G'}(K)\,
\textrm{m times}|X_{i,N'}(2t_0)=j),
\end{eqnarray*}
then
\begin{equation}
\sum_{m\geq 0}p_{m,j}^{K}(1-\frac{C_2K}{C_1\delta_{K}})^m\leq
\frac{Pr(X_{i,N}=j|\mathscr{E})}{Pr(X_{i,N'}=j)}\leq
Pr(\mathscr{E})^{-1}.
\end{equation}
 We can get by the same method as Cooper and Frieze showed in Lemma~4 in \cite{cooper2007}
that
\[p_{0,j}^{x_k}+p_{1,j}^{x_k}+p_{2,j}^{x_k}\geq 1-O((\ln n)^{-1}),\]
where
\[p_{m,j}^{x_k}=\frac{Pr(W_{i,N{\prime}}(2t_0)\in\mathcal
{W}_{m,j}^{x_k})}{Pr(X_{i,N{\prime}}(2t_0)=j)}\qquad 0\leq m \leq 2,
1\leq k \leq K.\]
 So we have
\begin{equation}
\sum_{m=0}^{2K}p_{m,j}^K \geq 1-O((\ln n)^{-1}) \end{equation} and
\begin{equation}
\sum_{m=0}^{2K}p_{m}^K \geq 1-O((\ln n)^{-1}).
\end{equation}
Now using equations (4),(5),(6), (7) and the fact $\delta_{K}>\delta
C \ln n$ from \textbf{P5},  we have
\[Pr(\mathscr{E})\geq \sum_{m=0}^{2K}p_m^{K}(1-\frac{C_2K}{C_1\delta_{K}})^m\geq(1-\frac{C_2K}{C_1\delta_{K}})^{2K}-O((\ln n)^{-1})=1-O((\ln n)^{-1}).\]
Similarly we have \[Pr(X_{i,N}(2t_0)=j|\mathscr{E})=(1+O((\ln
n)^{-1}))Pr(X_{i,N{\prime}}(2t_0)=j).\]
\end{proof}
\end{lemma}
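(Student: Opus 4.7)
The plan is to establish the three claims separately, since they need rather different tools: (\ref{lemma2-1}) is a spectral/conductance estimate for the \emph{reduced} chain on $N^{\prime}$, while (\ref{lemma2-2}) and (\ref{lemma2-3}) are coupling estimates comparing the walk on $N$ with that on $N^{\prime}$.

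For the mixing bound (\ref{lemma2-1}) I would invoke the standard conductance-based mixing argument. Property \textbf{P4} already supplies an isoperimetric constant $\Phi \ge C_1/(6C_2)$ for the weighted graph $N^{\prime}$, which is an absolute constant. The Cheeger-type inequality for the reversible chain on $N^{\prime}$ then gives a spectral gap bounded below by a constant, and from \textbf{P5} together with the bounds $C_1\le c_{ij}\le C_2$ one sees that the stationary measure $\pi_{N^{\prime}}(j)=c_j^{\prime}/\sum_i c_i^{\prime}$ is at least of order $1/(n\ln n)$. Inserting these into the usual $\ell^{\infty}$ mixing estimate $|P_{i,N^{\prime}}^{(t)}(j)-\pi_{N^{\prime}}(j)|\le \pi_{\min}^{-1/2}(1-\mathrm{gap})^{t}$ shows that $t_0=K_0\ln n$ with $K_0$ large enough makes the right-hand side $O(n^{-10})$, exactly as claimed.

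For (\ref{lemma2-2}) and (\ref{lemma2-3}) I would set up a walk-by-walk comparison. The two transition matrices differ only at vertices of $N_{G^{\prime}}(\Gamma)$, and there only by losing conductance $\le C_2K$ out of a total $c_v\ge C_1\delta_K$. So for any trajectory $W=(w_0,\ldots,w_{2t_0})$ staying in $V^{\prime}$ that leaves $N_{G^{\prime}}(\Gamma)$ exactly $m$ times, the likelihood ratio is
\[
1\ \ge\ \rho_W\ :=\ \frac{\Pr_{N}(W)}{\Pr_{N^{\prime}}(W)}\ \ge\ \Bigl(1-\tfrac{C_2K}{C_1\delta_K}\Bigr)^{\!m}.
\]
By \textbf{P5} the factor $C_2K/(C_1\delta_K)$ is $O(1/\ln n)$. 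Summing over $W$ one gets $\Pr(\mathscr{E})\ge\sum_{m\ge 0}p^{K}_m\bigl(1-C_2K/(C_1\delta_K)\bigr)^{m}$, where $p^{K}_m$ is the probability that the $N^{\prime}$-walk exits $N_{G^{\prime}}(\Gamma)$ exactly $m$ times in $2t_0$ steps; the analogous decomposition at a fixed endpoint $j$ yields the ratio bound needed for (\ref{lemma2-3}). Both estimates reduce to showing that $m$ is concentrated on a bounded range.

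The decisive step, and what I expect to be the main obstacle, is this concentration: one must prove
\[
\sum_{m=0}^{2K}p^{K}_m\ \ge\ 1-O\bigl((\ln n)^{-1}\bigr)
\]
(and the corresponding conditional version $\sum_{m=0}^{2K}p^{K}_{m,j}\ge 1-O((\ln n)^{-1})$). My plan is to argue per boundary vertex $x_k$ and take a union bound over the $K$ vertices. After the mixing time of part (\ref{lemma2-1}), the chance of being in $N_{G^{\prime}}(x_k)$ at any given step is at most $\pi_{N^{\prime}}(N_{G^{\prime}}(x_k))=O((\ln n)/n)$, so the expected number of visits across $2t_0=O(\ln n)$ steps is $O((\ln n)^{2}/n)=o(1)$. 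The delicate part is the first $O(\ln n)$ steps before mixing, where one must rule out the walk lingering near $x_k$; here \textbf{P2} (short cycles are far apart) and \textbf{P3} (existence of triangles, $5$-cycles, $7$-cycles, ensuring aperiodicity without creating local traps) play exactly the role they do in Lemma~4 of \cite{cooper2007}, and the argument there generalises from simple random walks to weighted walks with bounded conductances essentially verbatim. Once the concentration bound is in hand, plugging it together with $\rho_W\ge 1-O((\ln n)^{-1})$ into the two sums above completes both (\ref{lemma2-2}) and (\ref{lemma2-3}).
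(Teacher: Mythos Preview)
Your proposal is correct and follows essentially the same route as the paper: the conductance bound from \textbf{P4} for the mixing estimate, the walk-by-walk likelihood ratio $\rho_W\in[(1-C_2K/(C_1\delta_K))^m,1]$ for the coupling, and the reduction of (\ref{lemma2-2})--(\ref{lemma2-3}) to the concentration bound $\sum_{m\le 2K}p_m^K\ge 1-O((\ln n)^{-1})$, handled per boundary vertex via Lemma~4 of \cite{cooper2007}. Your write-up is in fact somewhat more explicit than the paper's sketch about \emph{why} the Cooper--Frieze argument carries over (the post-mixing expected visit count and the role of \textbf{P2}, \textbf{P3} before mixing), but the architecture is identical.
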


\begin{lemma}\label{lemma3}
Let $N=(G,c)$ be a connected electrical network. Let
$\Gamma\triangleq \{x_k,1\leq k \leq K \}\subset V$ be the distinct
boundary vertices and define for each $i\in V$
\[\tau_{x_k}(i)=min\{t|X_{i,N}(t)=x_k\}\qquad 1\leq k \leq K\]
\[P_i=\sum_{k=1}^K p_{x_k}Pr(\tau_{x_k}(i)=min(\tau_{x_s}(i),1\leq s \leq K))\]
so that $P_{x_k}=p_{x_k}$ for $1\leq k \leq K$. Then $\{P_{i},i\in
V\}$ is  the same as the distribution of potentials when $x_k$ is
set at $p_k$ for $1\leq k \leq K$. Especially when $K=2$, i.e., we
choose only two vertices $x_1,x_2$ as boundary points and set
$p_{x_1}=1, p_{x_2}=0$, then $P_i=Pr(\tau_{x_1}(i)<\tau_{x_2}(i))$
is the same as the
distribution of potentials when $x_1$ is set at $1$ and $x_2$ at $0$.\\
\begin{proof}
For a electrical network with boundary set $\Gamma$, there are no
current flow into or out of the network at vertices in
$V\setminus\Gamma$. Assume that $\{V_i, i\in V\}$ be the potential
distribution of vertices in $N$. By using Kirchhoff's current law
and Ohm's law, we have for $i\in V\setminus\Gamma$
\[\sum_{(i,j)\in E}\frac{V_i-V_j}{r_{ij}}=0,\]
which implies
\[V_i=\sum_{(i,j)\in E}\frac{c_{ij}}{c_i}V_j,\]
i.e., the potential distribution follows a harmonic function.  For
fixed $k$ and each $i\in V$,  set
\[P_i^k=Pr(\tau_{x_k}(i)=min_{1\leq s \leq K}\{\tau_{x_s}(i)\}).\]
Then
\begin{eqnarray*}
P_{x_k}^k&=&1\\
 P_{x_s}^k&=&0\qquad \textrm{for} \,\,x_s\in \Gamma,s\neq k
.\end{eqnarray*} Moreover, if we consider the very first step of the
random walk on $N$ started at $i\in V\setminus\Gamma$, then
\[P_i^k=\sum_{(i,j)\in E}p_{ij}P_j^k=\sum_{(i,j)\in E}\frac{c_{ij}}{c_i}P_j^k.\]
Since
\[P_i=\sum_{k=1}^K p_{x_k}P_i^k.\]
we can get by the superposition property that $\{P_i, i\in V\}$ is
the same as the potential distributions $\{V_i,i\in V\}$  if we set
the potential of the boundary points as $V_{x_k}=p_{x_k},
k=1,2,\cdots K$. Because they both follow a harmonic function with
the same boundary conditions.

\end{proof}

\end{lemma}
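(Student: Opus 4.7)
The plan is to show that the function $i\mapsto P_i$ defined via first-hitting probabilities solves exactly the same discrete Dirichlet problem on $N$ as the potential distribution $\{V_i\}$, and then to conclude by uniqueness. First I would record the ``potential side'' that the excerpt already sketches: Kirchhoff's current law at an interior vertex $i\in V\setminus\Gamma$, together with Ohm's law $V_i-V_j=r_{ij}I_{ij}$, forces
\[
V_i \;=\; \sum_{(i,j)\in E}\frac{c_{ij}}{c_i}\,V_j, \qquad V_{x_k}=p_{x_k}\ \ (1\le k\le K),
\]
so $\{V_i\}$ is a harmonic function with prescribed boundary values $p_{x_k}$.

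Second, I would analyse the probabilistic side one index at a time. For fixed $k$, set $P_i^k=\Pr(\tau_{x_k}(i)=\min_{1\le s\le K}\tau_{x_s}(i))$. The boundary values $P_{x_k}^k=1$ and $P_{x_s}^k=0$ for $s\ne k$ are immediate, since the minimum is attained at step $0$. For $i\in V\setminus\Gamma$, conditioning on the first step of the random walk $W_{i,N}$ and using the strong Markov property at time $1$ yields
\[
P_i^k \;=\; \sum_{(i,j)\in E} p_{ij}\,P_j^k \;=\; \sum_{(i,j)\in E}\frac{c_{ij}}{c_i}\,P_j^k,
\]
so $P_i^k$ is harmonic on $V\setminus\Gamma$ and solves the Dirichlet problem with boundary data $\delta_{sk}$. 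By linearity, $P_i=\sum_{k=1}^{K}p_{x_k}P_i^k$ is also harmonic on $V\setminus\Gamma$ with boundary values $P_{x_k}=p_{x_k}$, which agree with those of the potential.

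Third, I would conclude by uniqueness of the discrete Dirichlet problem on the connected network $N$. The difference $U_i:=V_i-P_i$ is harmonic on $V\setminus\Gamma$ and vanishes on $\Gamma$, so the discrete maximum principle (any function harmonic at an interior vertex is a weighted average of neighbouring values, hence cannot attain a strict interior extremum) forces $U\equiv 0$. Equivalently, if $\widetilde P$ is the transition matrix $P$ with rows indexed by $\Gamma$ replaced by zeros, then $I-\widetilde P$ restricted to interior vertices is invertible because every interior vertex reaches $\Gamma$ with positive probability in finitely many steps (connectedness of $G$); this invertibility gives the same conclusion, and in the special case $K=2$, $p_{x_1}=1$, $p_{x_2}=0$ one recovers the familiar identity $V_i=\Pr(\tau_{x_1}(i)<\tau_{x_2}(i))$.

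I do not expect a substantive obstacle: the entire argument is a discrete-harmonic computation plus uniqueness. The only point that deserves slight care is the uniqueness/maximum-principle step, which genuinely requires that each interior vertex can reach at least one boundary vertex; this is automatic from the assumption that $G$ is connected and $\Gamma\neq\emptyset$, and is the only place where connectedness of $N$ is invoked.
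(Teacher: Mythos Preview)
Your proposal is correct and follows essentially the same approach as the paper: both show that the potential satisfies the harmonic equation via Kirchhoff/Ohm, that each $P_i^k$ is harmonic by first-step conditioning with boundary data $\delta_{sk}$, and then superpose and invoke uniqueness of the Dirichlet problem. The only difference is that you spell out the uniqueness step (maximum principle / invertibility of $I-\widetilde P$) explicitly, whereas the paper simply asserts that two harmonic functions with the same boundary data coincide.
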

\section{Proof of the main Theorem and Remarks}
In this section, we, in fact, prove a stronger result than
Theorem~\ref{mainresult}.
\begin{theorem}\label{th31} Let $N=(G,c)$ be an electrical network with $G$ being proper
 and  $C_1\leq
c_{ij}\leq C_2$ for all $e=(i,j)\in E$. If $\Gamma \triangleq
\{x_1,x_2,\cdots,x_K\}$ is boundary vertices and  the boundary
potential as $V_{x_k}=p_{x_k}, 0\leq p_{x_k}\leq 1$ for
$k=1,2,\cdots,K$,  then the potential distribution of $v_i$  is
\[V_i=\frac{\sum_{k=1}^Kp_{x_k}c_{x_k}}{\sum_{k=1}^Kc_{x_k}}+O((\ln n)^{-1})\]
for each $i\in V\setminus\Gamma$.
\begin{proof}
 Let
$N^{\prime}=(G^{\prime},c)$  be the induced network
 obtained from $N=(G, c)$ by the induced graph $G^{\prime}=G[V\setminus \Gamma]\triangleq(V^{\prime},E^{\prime})
 $. Then by Lemma~\ref{lemma3} and equations $(1),(2)$ and $(3)$, we have that for $1\leq k\leq K$
and $i\in V^{\prime}$,
\begin{eqnarray*}
P_i^k&=&Pr(\tau_{x_k}(i)=min_{1\leq s \leq K}\{\tau_{x_s}(i)\})\\
&=&Pr(\tau_{x_k}(i)=min_{1\leq s \leq K}\{\tau_{x_s}(i)\}|\mathscr{E})Pr(\mathscr{E})\\
&+&Pr(\tau_{x_k}(i)=min_{1\leq s \leq K}\{\tau_{x_s}(i)\}|\mathscr{E}^c)(1-Pr(\mathscr{E}))\\
&=&Pr(\tau_{x_k}(i)=min_{1\leq s \leq
K}\{\tau_{x_s}(i)\}|\mathscr{E})
+O((\ln n)^{-1})\ \ \ \quad \quad  \textrm{(by  (2))}\\
&=&\sum_{j\notin \Gamma}Pr(\tau_{x_k}(i)=min_{1\leq s \leq K}\{\tau_{x_s}(i)\}|\mathscr{E},X_{i,N}(2t_0)=j)Pr(X_{i,N}(2t_0)=j|\mathscr{E})\\
&+&O((\ln n)^{-1})\ \ \ \quad \quad  \textrm{(time homogeneous and Markov Property)}\\
&=&\sum_{j\notin \Gamma }Pr(\tau_{x_k}(j)=min_{1\leq s \leq
K}\{\tau_{x_s}(j)\})Pr(X_{i,N}(2t_0)=j|\mathscr{E})+O((\ln n)^{-1})
\ \ \ \textrm{}
\\
&=&(1+O((\ln n)^{-1}))\sum_{j\notin \Gamma}
P_j^kPr(X_{i,N'}(2t_0)=j)+O((\ln n)^{-1}) \ \ \ \textrm{(by  (3))}
\\
&=&(1+O((\ln n)^{-1}))\sum_{j\notin \Gamma}P_j^k\pi_{N'}(j)+O((\ln
n)^{-1}) \ \ \ \quad \quad \quad  \textrm{(by (1))}
\\
&=&\sum_{j\notin \Gamma}P_j^k\pi_{N'}(j)+O((\ln n)^{-1}).\ \quad
\quad \quad \textrm{(by} \quad P_j^k<1)
\end{eqnarray*}
Hence
\[V_i=P_i=\sum_{k=1}^K p_{x_k}P_i^k\triangleq V_c+O((\ln n)^{-1}).\]
 Since the  total current
flowing into the network  is equal to the current flowing out, we
have
\[\sum_{k=1}^K\sum_{(x_k,i)\in E}(V_{x_k}-V_i)c_{ix_k}=0.\]
Then
\[\sum_{k=1}^K\sum_{(x_k,i)\in E,i\notin\Gamma }(p_{x_k}-V_c-O((\ln
n)^{-1}))c_{ix_k}=O(1),\] which implies
\[\sum_{k=1}^K\sum_{(x_k,i)\in
E }(p_{x_k}-V_c-O((\ln n)^{-1}))c_{ix_k}=O(1).\] Hence
\[\sum_{k=1}^K(p_{x_k}-V_c)c_{x_k}=O(1).\] Using \textbf{P5}, we have
\[V_c=\frac{\sum_{k=1}^Kp_{x_k}c_{x_k}}{\sum_{k=1}^K
c_{x_k}}+O((\ln n)^{-1}).\] This completed the proof.
\end{proof}
\end{theorem}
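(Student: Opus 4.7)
The plan is to translate the deterministic harmonic equation for $V_i$ into a probabilistic statement via Lemma~\ref{lemma3}, and then exploit the rapid mixing of the random walk on the induced network $N'=(G',c)$ obtained by deleting $\Gamma$ to conclude that $V_i$ is essentially independent of $i$. Once this ``flatness'' is established, one last application of Kirchhoff's current law at the boundary pins down the constant.

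First I would write $V_i=\sum_{k=1}^K p_{x_k}P_i^k$ with $P_i^k=\Pr(\tau_{x_k}(i)=\min_{s}\tau_{x_s}(i))$, which is legitimate by Lemma~\ref{lemma3}. To analyze $P_i^k$, I split according to the event $\mathscr{E}$ that the walk on $N$ started at $i$ avoids $\Gamma$ during the first $2t_0$ steps. On $\mathscr{E}^c$ the contribution is absorbed into an $O((\ln n)^{-1})$ term using (\ref{lemma2-2}), and on $\mathscr{E}$ the strong Markov property at time $2t_0$ gives
\[
P_i^k=\sum_{j\notin\Gamma}P_j^k\,\Pr(X_{i,N}(2t_0)=j\mid\mathscr{E})+O((\ln n)^{-1}).
\]
Next I replace the conditional law by the law of the walk on $N'$ using (\ref{lemma2-3}), and then use the mixing estimate (\ref{lemma2-1}) to substitute $\pi_{N'}(j)$ for $\Pr(X_{i,N'}(2t_0)=j)$. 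Since $0\le P_j^k\le 1$, the accumulated multiplicative error collapses into an additive $O((\ln n)^{-1})$, and I arrive at $P_i^k=\sum_{j\notin\Gamma}P_j^k\pi_{N'}(j)+O((\ln n)^{-1})$, an expression independent of $i$. Summing against $p_{x_k}$ yields $V_i=V_c+O((\ln n)^{-1})$ for a common constant $V_c$.

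To identify $V_c$, I apply Kirchhoff's current law at the boundary: the total current flowing into $\Gamma$ equals that flowing out, i.e. $\sum_{k=1}^K\sum_{(x_k,i)\in E}(V_{x_k}-V_i)c_{ix_k}=0$. Substituting $V_i=V_c+O((\ln n)^{-1})$ for every non-boundary neighbor and noting that $|\Gamma|=K$ contributes only $O(1)$ edges among the boundary vertices themselves, this becomes $\sum_{k=1}^K(p_{x_k}-V_c)c_{x_k}=O(1)$; dividing by $\sum_k c_{x_k}$ and using \textbf{P5} (which forces $c_{x_k}=\Theta(\ln n)$, hence $\sum_k c_{x_k}=\Theta(\ln n)$) converts the $O(1)$ into the advertised $O((\ln n)^{-1})$ and gives $V_c=\tfrac{\sum_k p_{x_k}c_{x_k}}{\sum_k c_{x_k}}+O((\ln n)^{-1})$.

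The main obstacle is the first stage: carefully propagating the three error terms from Lemma~\ref{lemma2} through the decomposition of $P_i^k$ without having the multiplicative $(1+O((\ln n)^{-1}))$ factors interact badly with the sum over $j$. The key observation that makes this routine is that $P_j^k$ is a bounded quantity, so the factor $(1+O((\ln n)^{-1}))\sum_{j}P_j^k\pi_{N'}(j)$ can be expanded safely. A secondary, but genuine, subtlety is that the argument works on a single proper graph $G$, so the randomness of $G$ only enters through Lemma~\ref{random-proper}, which guarantees that a $\mathbb{G}(n,p)$ sample is proper \textbf{whp}; combining this with the deterministic conclusion above then yields Theorem~\ref{mainresult}.
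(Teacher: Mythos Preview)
Your proposal is correct and follows essentially the same route as the paper's own proof: the probabilistic representation via Lemma~\ref{lemma3}, the conditioning on $\mathscr{E}$ and the chain of substitutions using (\ref{lemma2-2}), (\ref{lemma2-3}), (\ref{lemma2-1}) to reach $P_i^k=\sum_{j\notin\Gamma}P_j^k\pi_{N'}(j)+O((\ln n)^{-1})$, and finally the Kirchhoff balance at $\Gamma$ together with \textbf{P5} to identify $V_c$. Your handling of the error propagation (boundedness of $P_j^k$) and of the $O(1)$ boundary--boundary edges matches the paper's argument step for step.
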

Let us consider a special case of theorem~\ref{th31}, in which we
assign unit conductance for each edge and choose exactly two
vertices as the boundary set.
\begin{corollary} \label{cor32}
Let $N=(G,c)$ be an electrical network with $G$ being proper and
$c_{ij}$  being unit conductance. If  $\{x,y\}$ is the boundary set
and  is added to their unit potential difference as $V_{x}=1$ and
$V_y=0$, then the potential distribution of $v_i$ is
\[V_i=\frac{d(x)}{d(x)+d(y)}+O((\ln n)^{-1})\]
for each $i\in V\setminus\{x,y\}$.
\end{corollary}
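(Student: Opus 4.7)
The plan is to deduce this corollary as a direct specialization of Theorem~\ref{th31}. First, I set $K = 2$ with boundary vertices $x_1 = x$ and $x_2 = y$, and boundary potentials $p_{x_1} = 1$, $p_{x_2} = 0$. Because every edge carries unit conductance, the constants in the hypothesis of Theorem~\ref{th31} can be taken as $C_1 = C_2 = 1$, and the remaining structural requirements on $G$ are supplied by the assumption that $G$ is proper.

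Next, I would translate the weighted boundary degrees that appear in the general formula into ordinary degrees. Under unit conductance, $c_{x_k} = \sum_{(x_k,j) \in E} c_{x_k j} = d(x_k)$ for $k = 1, 2$. Substituting these expressions and the chosen boundary potentials into the conclusion of Theorem~\ref{th31} gives, for every $i \in V \setminus \{x,y\}$,
\[
V_i \;=\; \frac{p_{x_1} c_{x_1} + p_{x_2} c_{x_2}}{c_{x_1} + c_{x_2}} + O((\ln n)^{-1}) \;=\; \frac{d(x)}{d(x)+d(y)} + O((\ln n)^{-1}),
\]
which is exactly the claimed expression.

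I do not expect any real obstacle here: the probabilistic interpretation via Lemma~\ref{lemma3}, the mixing estimate of Lemma~\ref{lemma2}, and the Kirchhoff current-balance step were all absorbed in the proof of Theorem~\ref{th31}. The only content of the corollary is to render the general two-terminal case in its most symmetric form, emphasizing that up to the $O((\ln n)^{-1})$ error the approximate interior potential is determined solely by the degrees of the two boundary vertices, independently of the rest of the network's geometry.
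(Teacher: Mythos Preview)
Your proposal is correct and matches the paper's treatment: the paper presents the corollary as an immediate specialization of Theorem~\ref{th31} with $K=2$, unit conductances (so $c_{x_k}=d(x_k)$), and boundary values $p_x=1$, $p_y=0$, without giving a separate proof. Your explicit unpacking of the substitution is exactly the intended argument.
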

\quad\\\textbf{Proof of Theorem~\ref{mainresult} :} It is easy to
see  from Lemma~\ref{lemma2} and Theorem~\ref{th31} that
Theorem~\ref{mainresult} holds.

 {\bf Remark 1} Let us now see the concentration of potential
distribution  from a different point of view. We consider a
generalized consensus model on $G\in \mathbb{G}(n,p)$ with $K$
leaders $\Gamma\triangleq \{x_k, 1\leq k\leq K\}$. For each $i\in V$
let $s_i(0)$ denote the score of the $i$th agent towards some event
at the initial state. Set $s_{x_k}(0)=p_k, 1\leq k\leq K$, at each
step all agents except for the leaders change their scores by simply
averaging the scores of their neighbors. Let
$\textbf{s}(t)=\{s_1(t),s_2(t),\cdots,s_n(t)\}^T$ be the score
vector at step t. Then
\[\textbf{s}(t+1)=\textbf{P}\textbf{s}(t),\]
where $\textbf{P}$ is just the transition matrix of a random walk on
$G$ with $\Gamma$ as absorbing states. It is known that  that
$\textbf{s}(t)$ will approach to a vector
$\textbf{s}(\infty)=\{s_1(\infty),s_2(\infty),\cdots,s_n(\infty)\}^T$
as $t\to \infty$. Moreover $\textbf{s}(\infty)$ follows a harmonic
function with the same boundary conditions as the potential
distribution on the electrical network  in Theorem~\ref{th31} when
we set unit resistance for each edge. So the limiting score vector
also has the concentration property while we choose connecting
probability $p$ large enough since the harmonic function has unit
solution.

{\bf Remark 2} In Theorem~\ref{mainresult}, in order to make $G$ be
connected, we have to set the probability $p$ larger than $\frac{\ln
n}{n}$. Otherwise $G$ may be not connected (see
\cite{bollobas2001}). But We can still  consider our model on the
giant component of $G$ for $\frac{1}{n}<p<\frac{\ln n}{n}$ (see
\cite{bollobas2001}).

However while $p$ is small, there will be many vertices on the tree
tops which are meaningless for our model. In order to avoid this we
may consider our model on a special case of small-world network . We
can get a connected graph $G$ by simply adding a random graph
$\mathbb{G}(n,p)$ to a circle. In this case while $p>\frac{\ln
n}{n}$ we can still get the concentration property by the same
method we used in theorem 3.1 since $G$ is also proper \textbf{whp}.
However while $p$ is small we are not able to give a rigorous result
now. In the section below we will do some simulations on small-world
networks while the connecting probability is small.

\section{Further Discussions and Problems}
In this paper,  we present the potential  distributions of an
electrical network on proper graphs and the resistance on each edge
being bounds. It is natural to ask what the potential distributions
on other graphs and different resistance. In this section, we
consider the potential distributions of the electrical networks on
with different graphs, such as circles, and the small-world networks
(see \cite{watts1998}) and  the resistance $c_{ij}$ be i.i.d random
variables for each $(i,j)\in E$ which t may be closed to 0 or
$+\infty$.  Up to now, there is no theoretical results as
Theorem~\ref{mainresult}, since there seems no methods to deal with
these problems. But the simulations on these questions may appeal
some ideas.

First, we note here if the potential distribution except for the
boundary vertices are very close to a constant $V_c$, then similarly
as we proved in theorem~\ref{mainresult},
\[V_c\sim \frac{\sum_{k=1}^K p_{x_k}c_{x_k}}{\sum_{k=1}^K c_{x_k}}\triangleq \bar{V}_c\]
where $\{x_k,1\leq k \leq K\}$ are boundary vertices with
$V_{x_k}=p_{x_k}$. So we can use $\bar{V}_c$ as an approximation of
$V_c$.

We divide our simulations into three parts according to the
structures of  networks and three different independently random
distributions.
\\
\\
{\bf Case 1:} Circle. Let $G$ be a circle on $1000$ vertices, each
vertex $v_i$ has exactly two neighbors
 and $v_1$ is connected with $v_{1000}$. Set the boundary potential as $V_1=1,V_{251}=0.3,V_{501}=0.7,V_{751}=1$.
  In Figure 1, we plot three pictures of potential distributions
  according to choices of conductance,  where $c_{ij}$ is unit conductance,
    unit $U(0,1)$ distribution, and
  power-law distribution (see\cite{barabasi1999}), respectively. Here we use power-law
  distribution with density function as
  \begin{equation}
  f(x)\sim x^{-2.5} \qquad \qquad \qquad for\qquad x\geq1
  .\end{equation}

\begin{figure}[h!]
  \centering
  \subfigure[$c_{ij}$ is unit conductance]{
    \label{fig:subfig:a} 
    \includegraphics[width=1.6in]{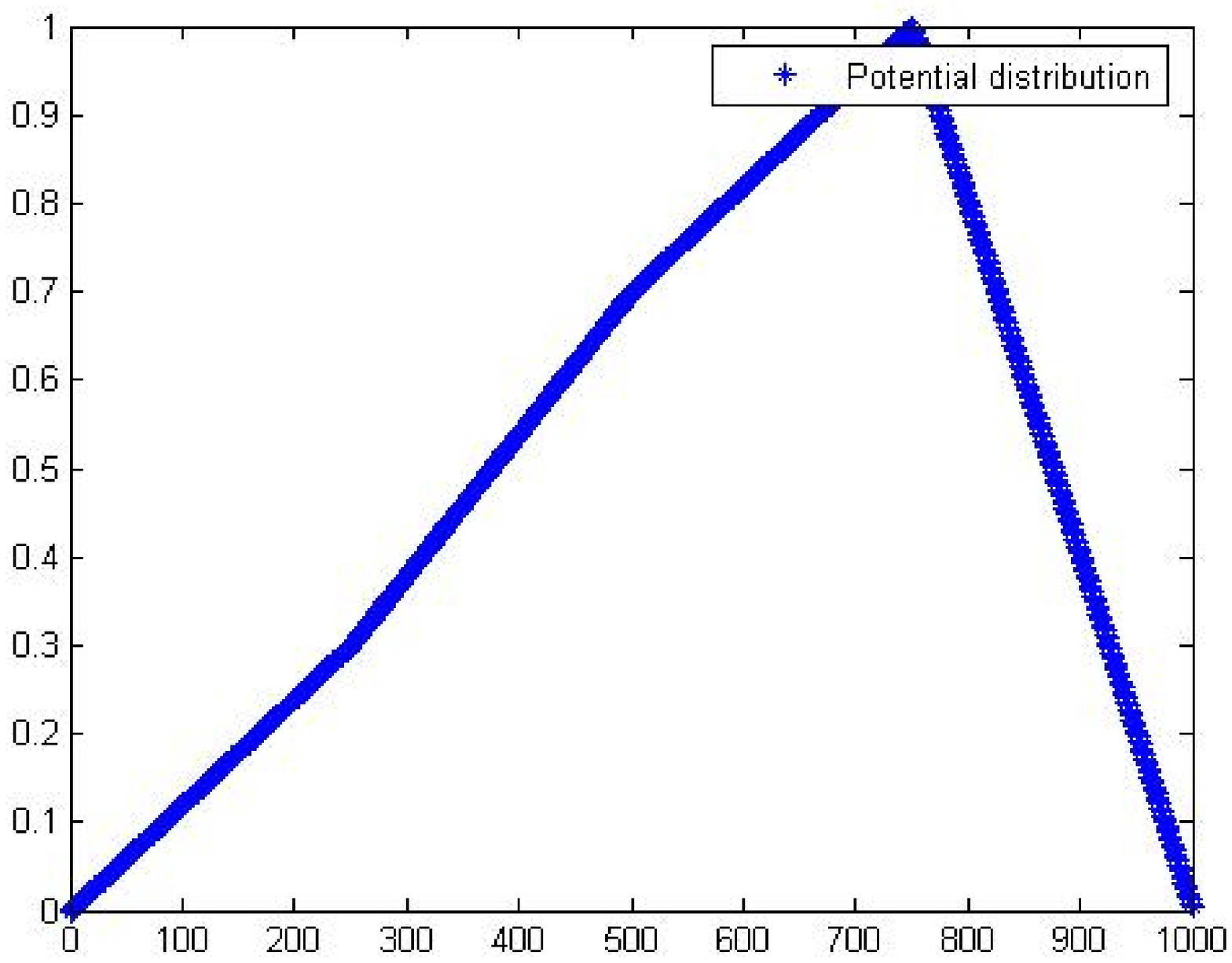}}
  \hspace{0.1in}
  \subfigure[$c_{ij}$ follows $U(0,1)$ distribution]{
    \label{fig:subfig:b} 
    \includegraphics[width=1.6in]{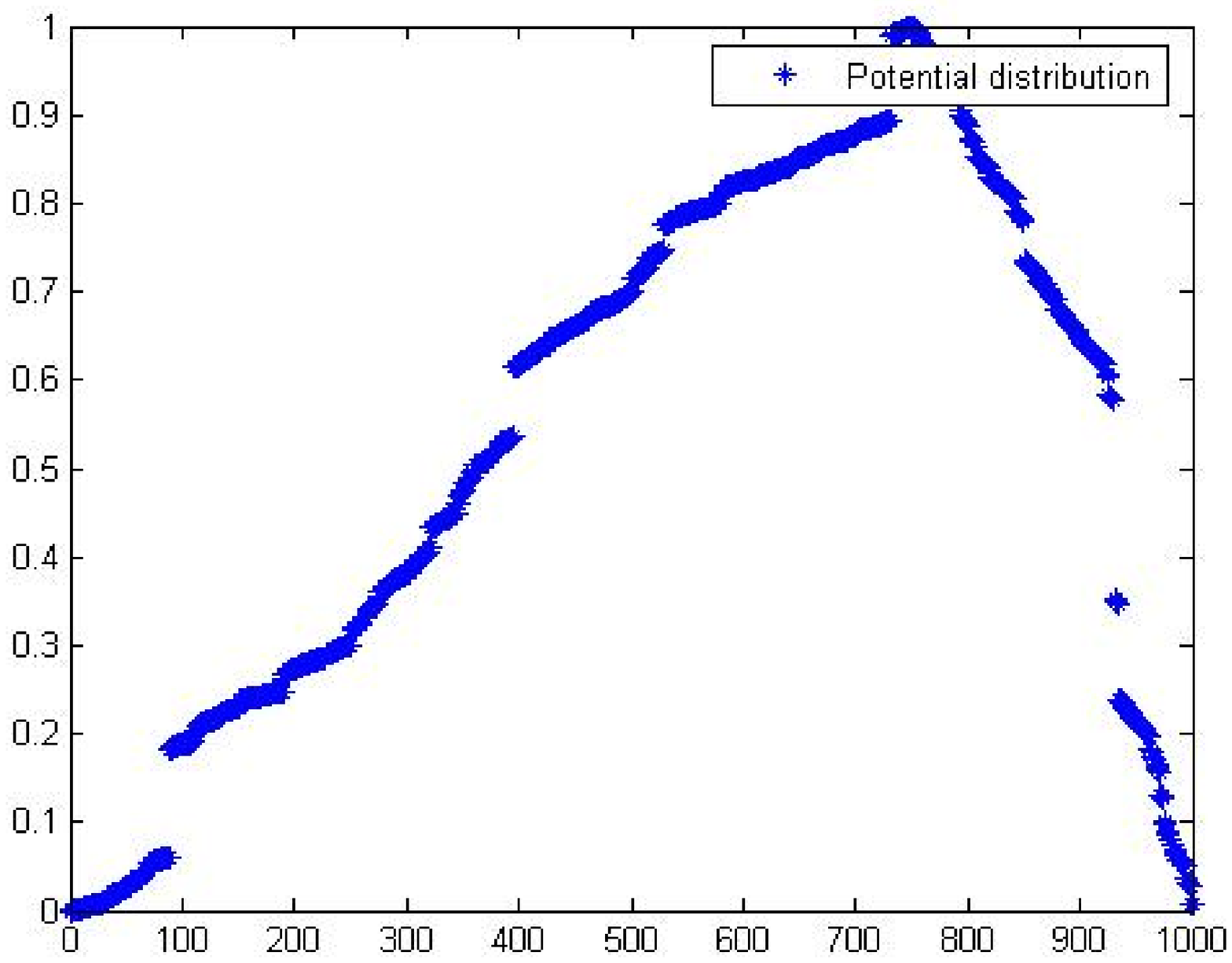}}
    \hspace{0.1in}
  \subfigure[$c_{ij}$ follows power-law distribution]{
    \label{fig:subfig:b} 
    \includegraphics[width=1.6in]{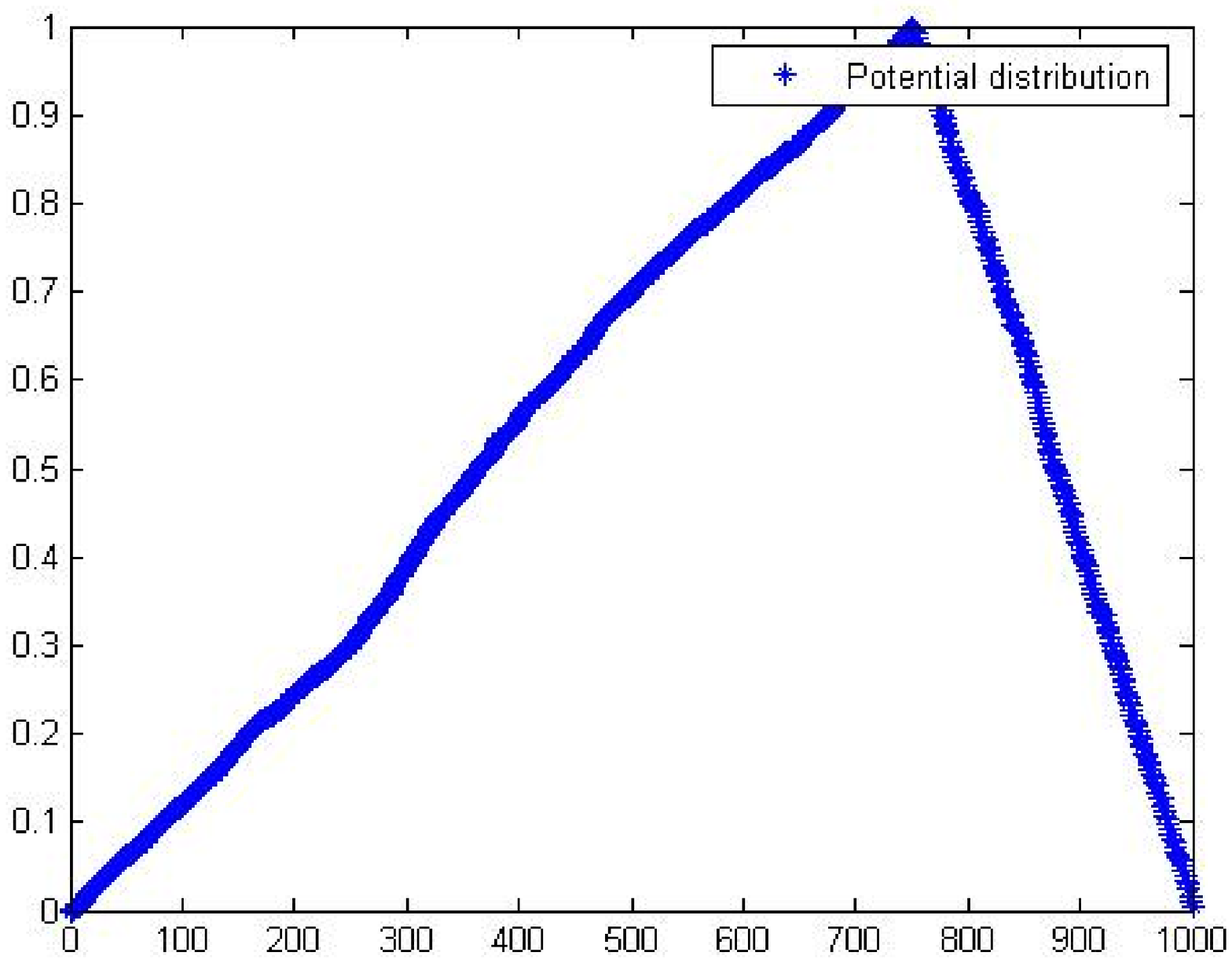}}
  \caption{Potential distribution on  circles}
  \label{fig:subfig} 
\end{figure}
\quad
  From Figure 1, it is easy to see that there exist no concentration of
potential distributions on circles no matter how we choose any
distribution of conductance.

{\bf Case 2:}  $\mathbb{G}(n,p)$ model. We choose $G \in
\mathbb{G}(n,p)$ with $n=1000$ , $p=0.01$ and the expected average
degree is $10$. So $G$ is proper \textbf{whp}. Set the boundary
potential as $V_1=1,V_{251}=0.3,V_{501}=0.7,V_{751}=1$. In Figure 2,
we plot three pictures of potential distributions on $N=(G, c)$,
where $c$ is unit conductance,
     unit $U(0,1)$ distribution, and
  power-law distribution (see\cite{barabasi1999}), respectively.

\begin{figure}[h!]
  \centering
  \subfigure[$c_{ij}$ is unit conductance]{
    \label{fig:subfig:a} 
    \includegraphics[width=1.6in]{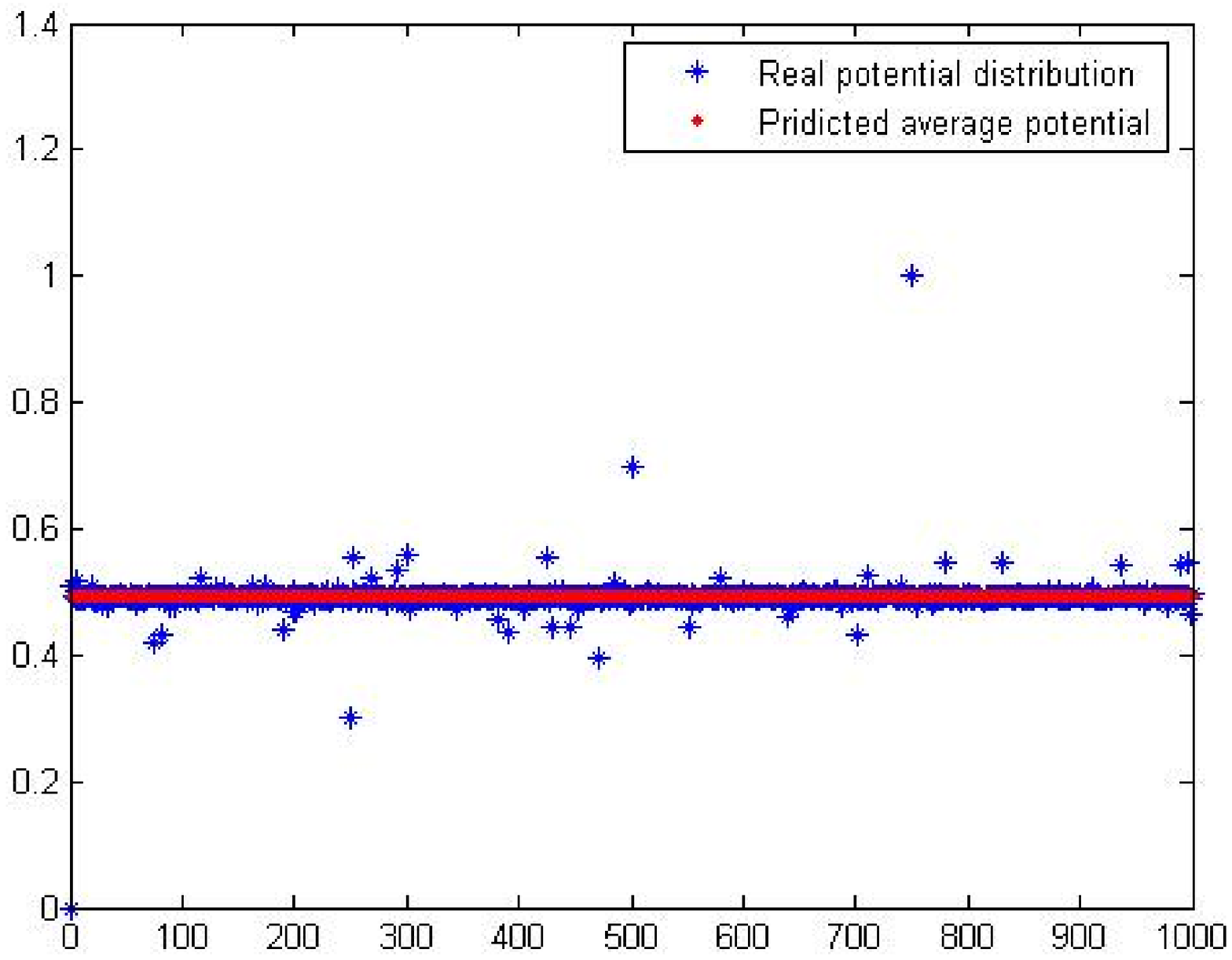}}
  \hspace{0.1in}
  \subfigure[$c_{ij}$ follows $U(0,1)$ distribution]{
    \label{fig:subfig:b} 
    \includegraphics[width=1.6in]{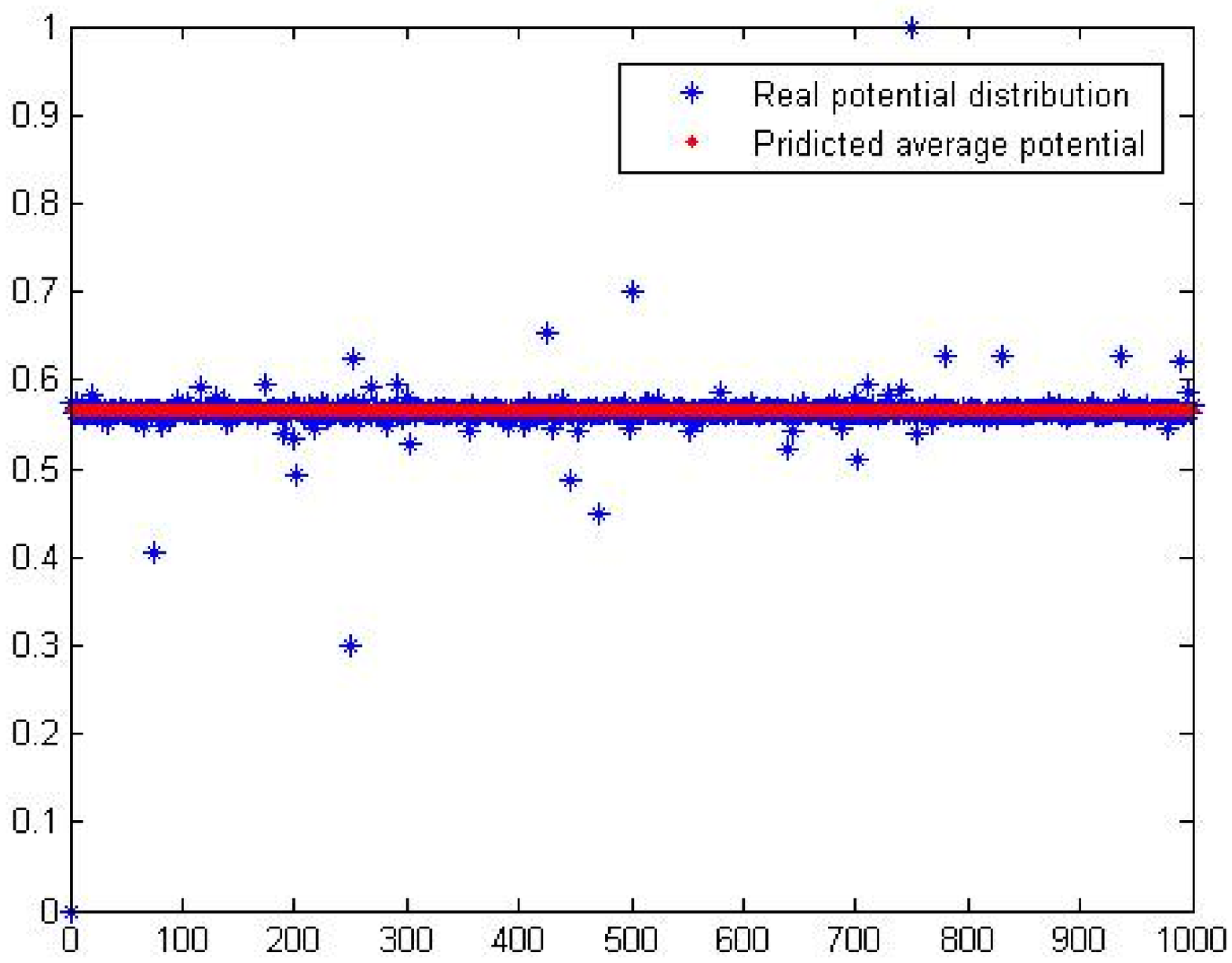}}
     \hspace{0.1in}
  \subfigure[$c_{ij}$ follows power-law distribution]{
    \label{fig:subfig:b} 
    \includegraphics[width=1.6in]{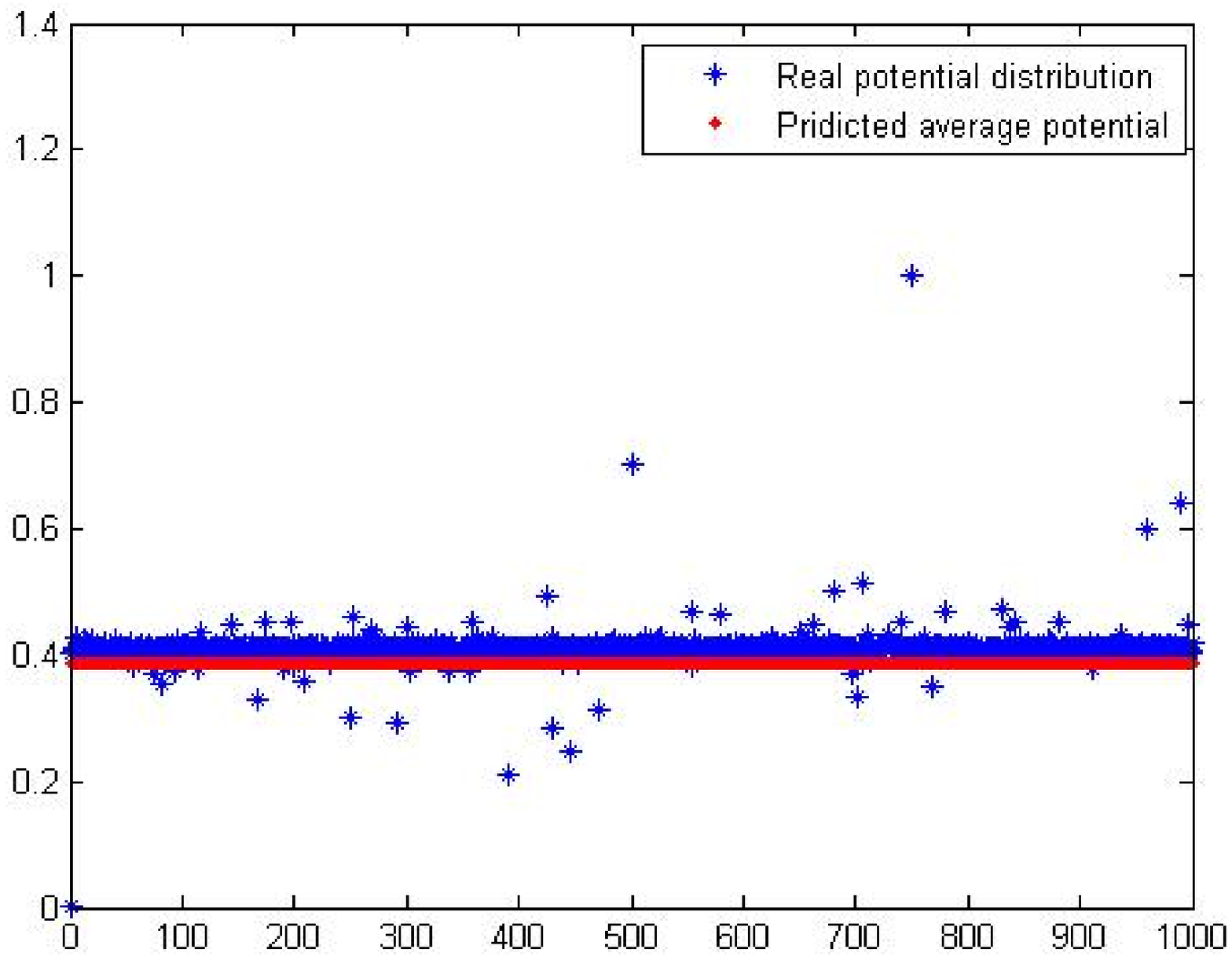}}
  \caption{Potential distribution on $\mathbb{G}(n,p)$ graphs}
  \label{fig:subfig} 
\end{figure}
\quad
\\
\\

  From Figure 2, it is easy to see that concentration of potential
distribution appears when we choose unit conductance as we proved in
Theorem~\ref{mainresult}, and we can use $\bar{V_c}$ as an efficient
approximation of $V_c$.  Even if  the conductance  follow certain
distributions such that it  approaches to $0$ or $+\infty$,  we can
still find concentration properties. But we are not able to give a
rigorous mathematical proof in this case.
\\
{\bf Case 3:} The small world network. We choose $G$ to be a random
graph $\mathbb{G}(n,p)$ adding to a
 circle of size $n$. Here we choose $n=1000$, $p=0.001$  so that $G$ may not be proper.
 Set the boundary potential as $V_1=1,V_{251}=0.3,V_{501}=0.7,V_{751}=1$. In Figure 3, we plot three pictures of potential
distributions, where $c$ is unit conductance,
     unit $U(0,1)$ distribution, and
  power-law distribution (see\cite{barabasi1999}), respectively.

 From Figure 3, it is easy to see  that even if we choose the
connecting probability $p$ very small, there also exists a
concentration of potential distribution on small-world network
except for a few vertices. We guess this is because the random walk
on small-world network also has short mixing time  as the random
walk on $\mathbb{G}(n,p)$ model with large $p$ which we mentioned in
equation (1).

It seems from the simulation results that only the structure of the
electrical network will affect the concentration property. So we
propose the following two questions:\\
{\bf Problem 4.1} Does the potential distribution for an electrical
network $N=(G,c)$ concentrate where $G$ is proper and $c_{i,j}$ is
i.i.d?
\\
{\bf Problem 4.2} Does the potential distribution for a electrical
network $N=(G,c)$ concentrate where $G$ is from small-world network
with low connecting probability and $c_{i,j}$ is unit conductance or
i.i.d as unit $U(0,1)$ or power-law distributions?\\

\begin{figure}[h!]
  \centering
  \subfigure[$c_{ij}$ is unit conductance]{
    \label{fig:subfig:a} 
    \includegraphics[width=1.6in]{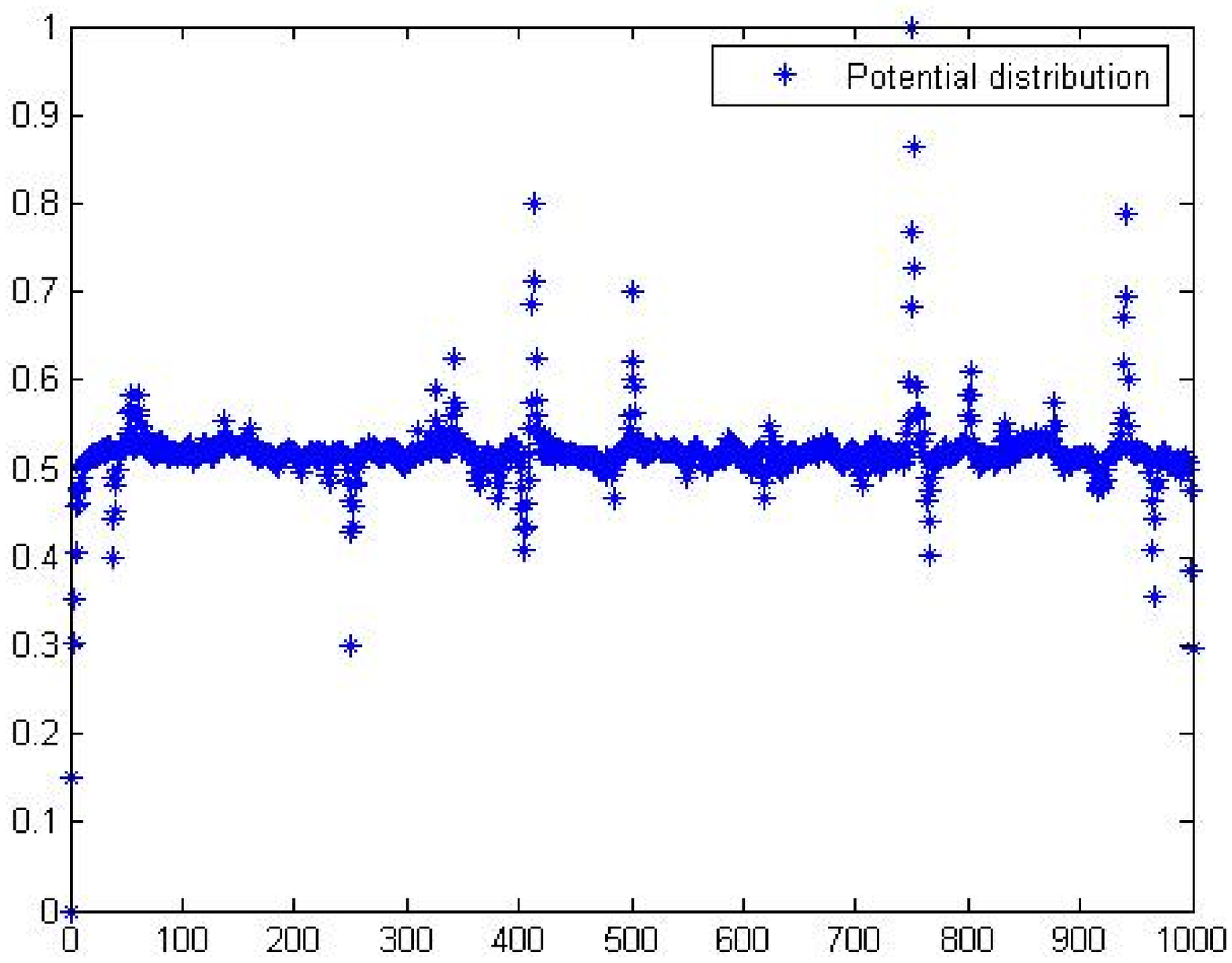}}
  \hspace{0.1in}
  \subfigure[$c_{ij}$ follows $U(0,1)$ distribution]{
    \label{fig:subfig:b} 
    \includegraphics[width=1.6in]{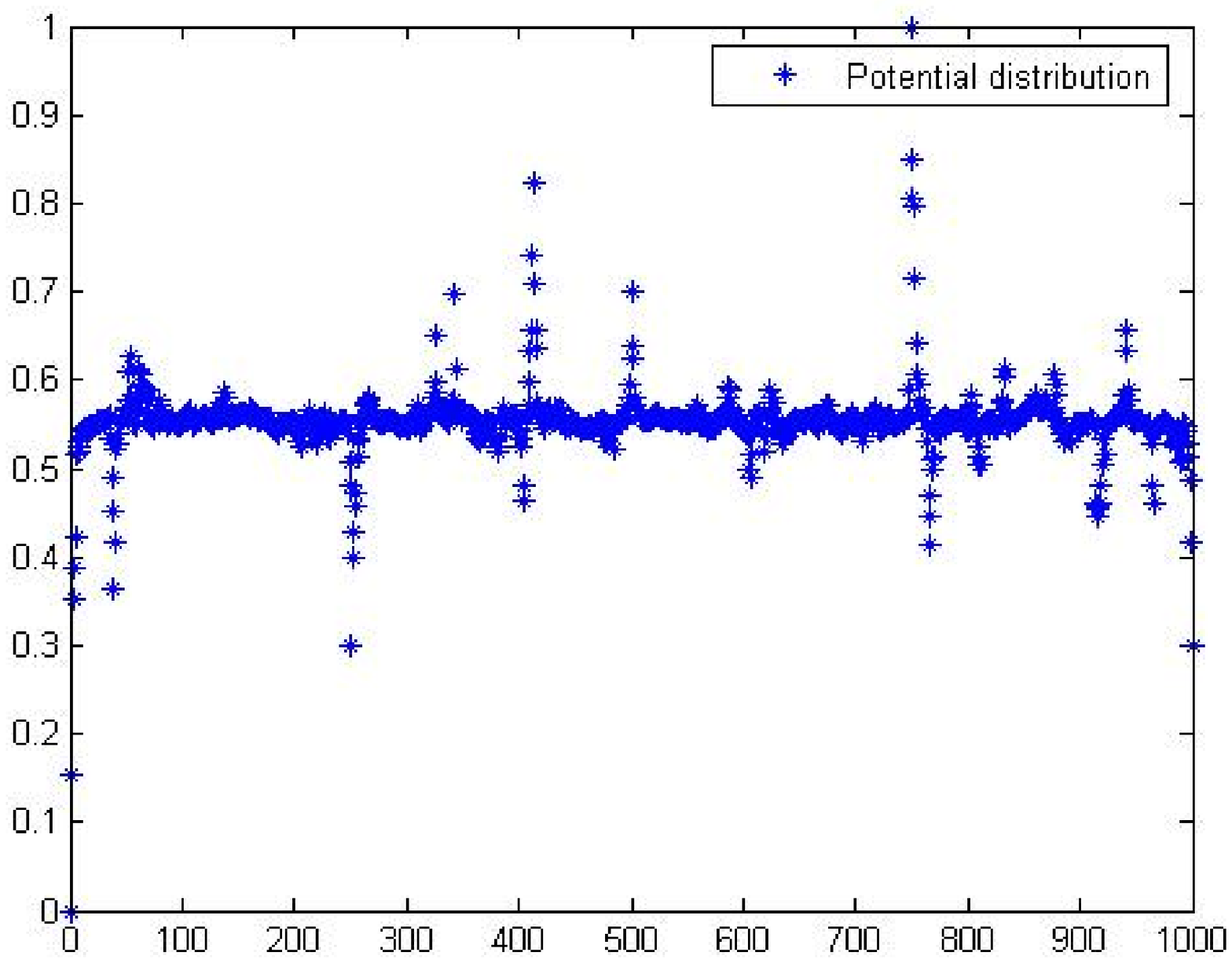}}
 \hspace{0.1in}
  \subfigure[$c_{ij}$ follows power-law distribution]{
    \label{fig:subfig:b} 
    \includegraphics[width=1.6in]{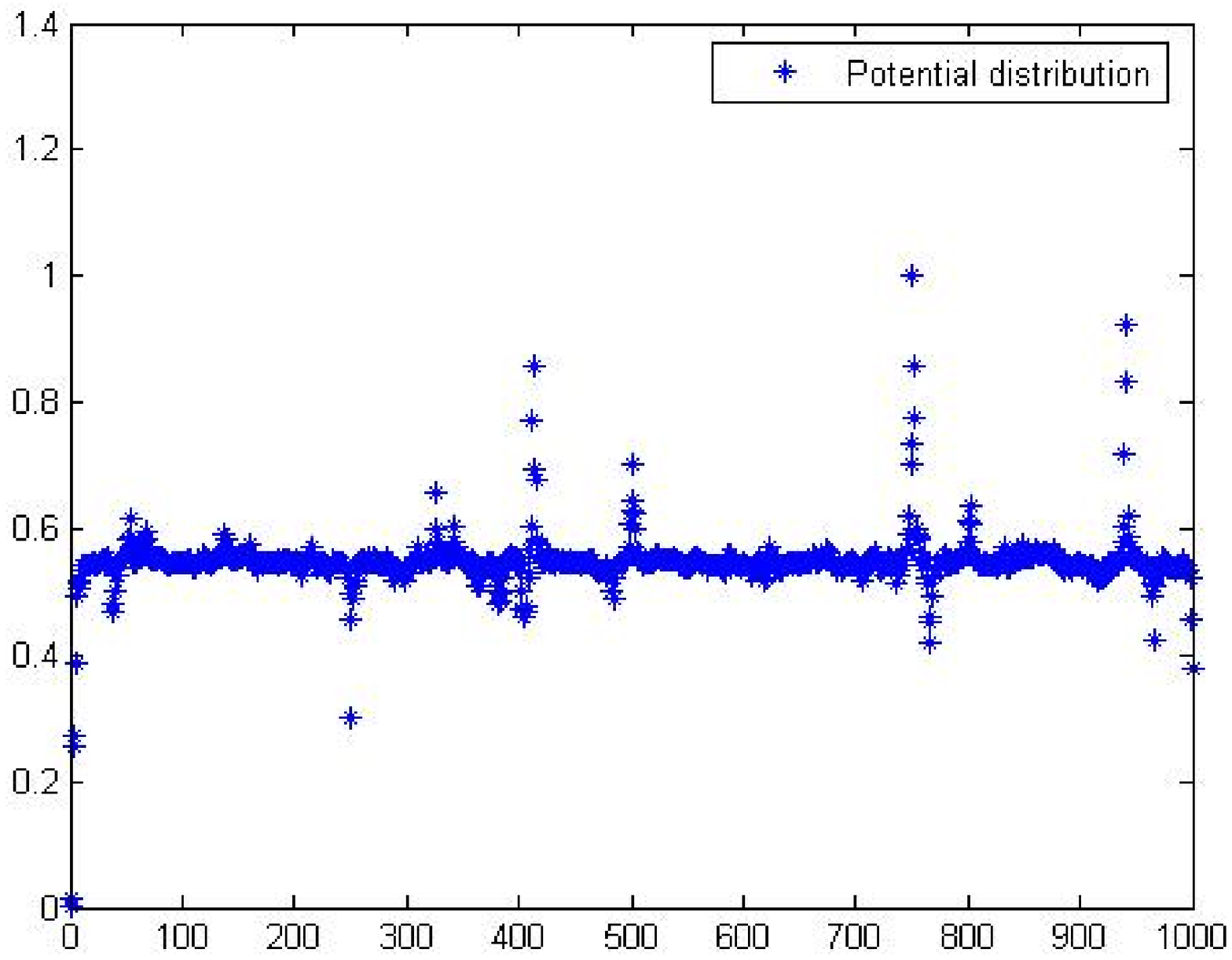}}
  \caption{Potential distribution on small-world networks}
  \label{fig:subfig} 
\end{figure}

\end{document}